\theoremstyle{plain}
\newtheorem{Thm}{Theorem}[section]
\newtheorem{Prop}[Thm]{Proposition}
\newtheorem{Cor}[Thm]{Corollary}
\newtheorem{Conj}[Thm]{Conjecture}
\theoremstyle{definition}
\newtheorem{Def}[Thm]{Definition}
\theoremstyle{remark}
\newtheorem{Rmk}[Thm]{Remark}
\newtheorem{Ex}[Thm]{Example}
\newcommand\Sing{\textup{Sing\,}}
\newcommand\codim{\textup{codim\,}}
\newcommand\reg{\textup{reg\,}}
\newcommand\depth{\textup{depth\,}}
\newcommand\pd{\textup{pd\,}}
\newcommand\Unpr{\textup{Unpr}}
\newcommand\Tor{\textup{Tor}}
\numberwithin{equation}{section}
\begin{document}

\title[Castelnuovo--Mumford regularity of unprojections]{Castelnuovo--Mumford regularity of unprojections and the Eisenbud--Goto regularity conjecture}
\author[Junho Choe]{Junho Choe}
\address{School of Mathematics, Korea Institute for Advanced Study (KIAS), 85 Hoegiro Dongdaemun-gu, Seoul 02455, Republic of Korea}
\email{junhochoe@kias.re.kr}

\date{\today}

\begin{abstract}
McCullough and Peeva found sequences of counterexamples to the Eisenbud--Goto conjecture on the Castelnuovo--Mumford regularity by using Rees-like algebras, where entries of each sequence have increasing dimensions and codimensions. In this paper we suggest another method to construct counterexamples to the conjecture with any fixed dimension $n\geq3$ and any fixed codimension $e\geq2$. Our strategy is an unprojection process and utilizes the possible complexity of homogeneous ideals with three generators. Furthermore, our counterexamples exhibit how singularities affect the Castelnuovo--Mumford regularity.
\end{abstract}

\keywords{Castelnuovo–Mumford regularity, Eisenbud--Goto regularity conjecture, unprojections}
\subjclass[2010]{Primary:~14N05; Secondary:~13D02}
\maketitle
\setcounter{page}{1}

\section{Introduction}
Throughout this paper, we work over an algebraically closed field $\Bbbk$ of any characteristic, every \emph{variety} is always irreducible and reduced, and if its dimension is $n$, then it is called an \emph{$n$-fold} for brevity. Let $X\subseteq\mathbb P^r$ be a projective variety which is \emph{nondegenerate}, that is, it lies in no smaller subspaces. Write $S(X)=S(\mathbb P^r)/I(X)$ for the homogeneous coordinate ring and ideal of $X\subseteq\mathbb P^r$.

The \emph{Castelnuovo--Mumford regularity}, or simply the \emph{regularity}, is one of the most important invariants in projective algebraic geometry. It is used to express homological complexity as follows. Let $S=S(\mathbb P^r)$ be the polynomial ring in $r+1$ variables with the standard grading, and consider a finitely generated graded $S$-module $M$ with minimal (graded) free resolution
$$
\xymatrix{
0 & M \ar[l] & F_0 \ar[l] & F_1 \ar[l] & \cdots \ar[l] & F_i \ar[l] & \cdots, \ar[l] 
}
$$
where the graded $S$-free modules $F_i$ can be written as
$$
F_i=\bigoplus_{j\in\mathbb Z}S^{\beta_{i,j}(M)}(-i-j).
$$
Then the \emph{regularity} of $M$ is defined to be 
$$
\reg M=\max\{j\in\mathbb Z:\beta_{i,j}(M)\neq 0\text{ for some }i\},
$$
and the \emph{regularity} of $X\subseteq\mathbb P^r$, denoted by $\reg X$, is defined to be that of $I(X)$. 

Note that the regularity also measures cohomological complexity in relation of the Serre vanishing theorem. Refer to the well-known fact that for a coherent sheaf $F$ on $\mathbb P^r$ if its section module
$$
M=\bigoplus_{j\in\mathbb Z}H^0(\mathbb P^r,F(j))
$$
with degree pieces $M_j=H^0(\mathbb P^r,F(j))$ is finitely generated over $S$, then an integer $q$ satisfies
$$
H^i(\mathbb P^r,F(q-i))=0\quad\text{for every $i>0$}
$$
precisely when $q\geq\reg M$.

The \emph{Eisenbud--Goto regularity conjecture}, or the \emph{regularity conjecture} for short, says that the regularity of $X\subseteq\mathbb P^r$ has an upper bound in terms of the degree and codimension of $X$ as follows.

\begin{Conj}[Eisenbud--Goto \cite{eisenbud1984linear}]
Let $X\subseteq\mathbb P^r$ be any nondegenerate projective variety. Then we have
$$
\reg X\leq\deg X-\codim X+1.
$$
\end{Conj}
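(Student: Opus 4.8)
The plan is to reduce the statement for an $n$-fold to the case of curves, where it is a theorem, by repeatedly cutting with a general hyperplane. The first observation is that the right-hand side is a hyperplane-section invariant: if $\dim X\geq 1$ and $H\subseteq\mathbb P^r$ is general, then $Y=X\cap H\subseteq H\cong\mathbb P^{r-1}$ is again nondegenerate, reduced and irreducible, with $\deg Y=\deg X$ and $\codim Y=\codim X$, so the target bound $\deg X-\codim X+1$ is unchanged under the cut. Iterating $n-1$ times produces a general curve section $C\subseteq\mathbb P^{r-n+1}$ for which $\deg C-\codim C+1=\deg X-\codim X+1$, and it would then suffice to compare $\reg X$ with $\reg C$.

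For this comparison I would use the standard cohomological behaviour of regularity under a general hyperplane. Since $I_X$ is saturated and $\dim X\geq 1$, the maximal ideal $\mathfrak m$ is not associated to $S/I_X$, so a general linear form $h$ is a nonzerodivisor on $S/I_X$, whence $\reg(S/I_X)=\reg\bigl(\bar S/(I_X+(h))\bigr)$, where $\bar S=S/(h)$. The saturation of $I_X+(h)$ in $\bar S$ is exactly $I_Y$, and the two ideals differ only in $H^0_{\mathfrak m}$; comparing the local cohomology modules gives $\reg Y\leq\reg X$, with equality whenever $I_X+(h)$ is already saturated, which is the case as soon as $\depth(S/I_X)\geq 2$. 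Propagating equality down to $C$ and invoking the curve case would then yield $\reg X=\reg C\leq\deg C-\codim C+1=\deg X-\codim X+1$.

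The curve case itself I would settle by the Gruson--Lazarsfeld--Peskine method: take a general projection of $C$ to a plane curve, control the conductor and the degrees of the defining forms through the uniform (linear general) position principle, and read off the linear bound $\reg C\leq\deg C-\codim C+1$ from the resulting resolution. This input is genuinely available and is the technical heart of the argument on the curve level.

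The decisive obstacle is that the hyperplane-section step only delivers the inequality $\reg Y\leq\reg X$, which is useless as an upper bound on $\reg X$ unless it is an equality. Equality is ensured by $\depth(S/I_X)\geq 2$ at every stage of the descent, a condition available for arithmetically Cohen--Macaulay varieties and, in low dimension, for smooth ones, but not in general. For singular $n$-folds with $n\geq 3$ the depth can drop, and the discrepancy between $I_X+(h)$ and its saturation $I_Y$ becomes a genuine finite-length contribution to $H^0_{\mathfrak m}$ that pushes $\reg X$ strictly above $\deg X-\codim X+1$. Thus I expect the reduction-to-curves strategy to close only on the arithmetically Cohen--Macaulay (or smooth, low-dimensional) locus and to break down exactly where singularities appear; in other words, the conjecture as stated fails, and the role of the unprojection construction in this paper is precisely to manufacture this local-cohomology discrepancy and thereby violate the bound.
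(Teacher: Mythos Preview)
The statement you were asked to prove is not a theorem in the paper but a \emph{conjecture}, and the paper does not prove it. On the contrary, the entire point of the paper is to construct new counterexamples to it (following McCullough--Peeva), so there is no ``paper's own proof'' to compare against.

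Your proposal is therefore not a proof of the conjecture, and you recognise this yourself: you correctly isolate the fatal asymmetry in the hyperplane-section reduction, namely that cutting by a general linear form gives $\reg Y\leq\reg X$ rather than the reverse, so the curve bound cannot be propagated upward without a depth hypothesis. Your diagnosis that the argument closes only in the arithmetically Cohen--Macaulay (or smooth low-dimensional) regime, and that singular $n$-folds with $n\geq 3$ can violate the bound via a finite-length $H^0_{\mathfrak m}$ contribution, is exactly right and matches the paper's philosophy.

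What your write-up lacks, relative to the paper, is any \emph{construction}: identifying an obstruction to a proof strategy is not the same as exhibiting a counterexample. The paper's contribution is to build such varieties explicitly via unprojection, controlling the partial elimination ideals so that the exceptional-fundamental scheme carries a three-generated ideal of enormous regularity (\Cref{family}, \Cref{fixing}), and then reading off $\deg X$, $\reg X$ and $\pd S_X$ through the cancellation principle (\Cref{cancel}, \Cref{compu}). Your final sentence gestures at this, but the substance --- that one can force $\reg X=\Omega((\deg X)^{\lfloor (n+1)/2\rfloor})$ for every fixed $n\geq 3$ and $e\geq 2$ --- is the actual theorem of the paper, not something your discussion establishes.
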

This conjecture holds true for
\begin{enumerate}
\item any projective variety $X\subseteq\mathbb P^r$ such that $S(X)$ is Cohen--Macaulay \cite{eisenbud1984linear},

\item any curves \cite{castelnuovo1893sui,gruson1983theorem},

\item smooth or mildly singular surfaces over $\mathbb C$ \cite{pinkham1986castelnuovo,lazarsfeld1987sharp,niu2015castelnuovo}, and

\item smooth or mildly singular threefolds in $\mathbb P^5$ over $\mathbb C$ \cite{kwak1999castelnuovo,niu2022castelnuovo}.
\end{enumerate}
For other related results see for example \cite{ran1990local,kwak2000generic,noma2014generic,kwak2020bound}.

But counterexamples to the regularity conjecture have been found by McCullough and Peeva \cite{mccullough2018counterexamples}. Their method uses both the \emph{Rees-like algebra} and the existence of homogeneous (nonprime) ideals having extremely big regularity. In fact, McCullough and Peeva showed that for any univariate (real) polynomial $p$, one can find a nondegenerate projective variety $X\subset\mathbb P^r$ of large dimension and codimension such that
$$
\reg X> p(\deg X).
$$
Moreover, they presented a computational example which is a nondegenerate singular projective threefold in $\mathbb P^5$ of degree $31$ and regularity $38$.

Our main result is an \emph{unprojection} process that is able to construct a collection of counterexamples to the regularity conjecture with any fixed dimension $n\geq3$ and any fixed codimension $e=r-n\geq2$, based on \emph{Green's partial elimination theory} \cite{green1998generic}. Our method works due to the possible complexity of homogeneous ideals with three generators.

Now let us state our main theorem. We need the following asymptotic notation. If $(a_d)$ and $(b_d)$ are sequences of positive numbers, then we write
$$
a_d=\Omega(b_d)\quad\text{when}\quad 0<\liminf_{d\to\infty}\frac{a_d}{b_d}\leq\infty.
$$

\begin{Thm}\label{ce}
Fix integers $n\geq 3$ and $e\geq2$. Then there exists a sequence 
$$
(X_d\subset\mathbb P^{n+e}:d=2,3,\ldots)
$$ 
of nondegenerate projective $n$-folds in $\mathbb P^{n+e}$ such that ${\displaystyle \lim_{d\to\infty}\deg X_d=\infty}$, and
$$
\reg X_d=\Omega((\deg X_d)^k)\quad\text{with}\quad k=\left\lfloor\frac{n+1}{2}\right\rfloor.
$$
\end{Thm}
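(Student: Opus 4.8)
The plan is to realize each $X_d$ as an \emph{unprojection} of a carefully chosen pair $(W_d,D_d)$, where $W_d\subset\mathbb P^{n+e-1}$ is a geometrically simple ambient $n$-fold of small, essentially constant regularity (for instance a cone over a rational normal scroll), and $D_d\subset W_d$ is a \emph{fundamental scheme} whose ideal is manufactured from a family of homogeneous ideals with only three generators. The first step is therefore to isolate, exploiting the possible complexity of three-generated ideals, a family of subschemes $D_d$ supported on a linear subspace such that $\deg D_d$ grows only linearly in the parameter $d$, while $\reg D_d=\Omega(d^k)$ with $k=\lfloor(n+1)/2\rfloor$. I expect the exponent to be dictated by a dimension budget: a complexity-$k$ gadget of this shape occupies a fundamental scheme of dimension about $k-1$, and both it and its exceptional partner must sit inside the $n$-fold, which forces $2k-1\leq n$, i.e. $k\leq(n+1)/2$; the floor is exactly the largest admissible value. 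This already explains why the predicted exponent depends only on $n$.

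Second, I would feed $(W_d,D_d)$ into the unprojection machinery to form $X_d=\Unpr(D_d\subset W_d)\subset\mathbb P^{n+e}$. A single unprojection step adjoins one variable and raises the codimension by one while preserving the dimension, so by prescribing the codimension of $W_d$ (equivalently, the number of unprojection steps) one reaches any fixed $e\geq2$ with $n$ held constant; this is the structural reason $k$ is insensitive to $e$. At this stage one must check that $X_d$ is a genuine nondegenerate projective variety, that is, that the unprojection ideal is prime, reduced, and contained in no hyperplane. This is the analogue of the step that forced McCullough and Peeva to pass to Rees-like algebras, and here it is the unprojection itself that is supposed to manufacture an honest irreducible, reduced variety out of merely schematic data.

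Third, I would propagate the two numerical invariants through the construction. On the degree side the exceptional divisor over $D_d$ contributes additively, so $\deg X_d=\deg W_d+O(\deg D_d)$ grows linearly in $d$, giving $\lim_{d\to\infty}\deg X_d=\infty$. On the regularity side the decisive point is a \emph{lower} bound: one must show that the complexity of the fundamental scheme survives into the unprojection, so that $\reg X_d\geq\reg D_d-O(1)=\Omega(d^k)$. Combining the two estimates then yields $\reg X_d=\Omega\bigl((\deg X_d)^k\bigr)$, which is the assertion.

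The main obstacle is precisely this regularity lower bound. Whereas upper bounds are governed by Serre-type vanishing, a lower bound demands exhibiting a specific nonvanishing graded Betti number $\beta_{i,j}(X_d)\neq0$ with $j$ of order $d^k$, equivalently a nonvanishing piece of local cohomology $H^i_{\mathfrak m}(S/I_{X_d})$ in large degree. I would obtain it by reading off the relevant strand of the minimal free resolution of the unprojection from a Kustin--Miller/Papadakis mapping-cone description, and verifying that the high-degree syzygies of $D_d$ persist into $X_d$ rather than cancelling against the contribution of $W_d$. A secondary but genuine difficulty, already noted, is the integrality and nondegeneracy of $X_d$; I would handle it by arranging the three-generated data so that $D_d$ meets $W_d$ in the good position the unprojection requires to be prime.
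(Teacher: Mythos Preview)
Your overall architecture---unprojection plus three-generated ideals plus a transfer of the fundamental scheme's regularity---is the paper's strategy, but one structural choice in your plan would not work as stated. You propose to keep the ambient $W_d$ fixed and geometrically simple (a scroll) while pushing all the complexity into $D_d$. The paper does the opposite: the base of the unprojection is a \emph{hypersurface} $Y_d\subset\mathbb P^{2k+1}$ of degree $\Theta(d)$, defined by a single irreducible form $G_d$ that already encodes the complexity. The point of Proposition~\ref{fixing} is precisely to package the high-regularity ideal $U_d\subset\Bbbk[y_1,z_1,\dots,y_k,z_k]$ of Theorem~\ref{family} into this one form $G_d$, so that with $a=y_0^{6k-6}$ and $f=z_0^{6k-7}$ the ideal $T_d=(G_d,a,f)=I_{Y_d}+(a,f)$ is genuinely three-generated and inherits $\reg T_d=\Omega(d^k)$. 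If you replace $Y_d$ by a fixed scroll, $I_W$ has many quadratic generators, $I_W+(a,f)$ is no longer three-generated, and there is no obvious way to inject the Caviglia-type complexity. This also changes the degree bookkeeping: in the paper $\deg X_d=(\deg f+1)\deg Y_d-\deg(R/T_d)=\Theta(d)$ is driven by $\deg Y_d$, not by the exceptional-fundamental scheme as in your formula.

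For the regularity lower bound the paper does not use a Kustin--Miller mapping cone; it builds a spectral sequence in the partial elimination modules $M_i(X,z)$ (Lemma~\ref{spectral}) and extracts a cancellation principle (Theorem~\ref{cancel}), which under a domination hypothesis gives $\reg X=\reg M_1(X,z)$ exactly (Corollary~\ref{compu}). The hypothesis that $M_i(X_d,z)=0$ for $i\ge2$ is verified via the convexity criterion of Proposition~\ref{degsimple}, and this step is where your proposed ``non-cancellation'' check actually happens. Two smaller points you omit: codimension $e>2$ is reached not by varying the initial ambient but by iterating further unprojections with a \emph{regular sequence} $\{f,a\}$ (the last clause of Corollary~\ref{compu}), which preserves all asymptotics; and odd $n=2k-1$ is obtained by a general hyperplane section, legitimate because $\depth S_{X_d}=2$.
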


This paper is organized as follows. In \Cref{secunpr} we introduce the partial elimination theory and the unprojection. In \Cref{secinvariant} we give preliminary results to compute significant invariants such as degree, regularity, and projective dimension. In \Cref{secfamily} we list certain families of homogeneous ideals whose regularity values (seem to) rapidly increase. Some of them become ingredients for the proof of the main theorem. In \Cref{secproof} we finally prove the main theorem, and a specific counterexample to the regularity conjecture is given in detail.

The following assumptions and notations are used.
\begin{itemize}
\item $X\subseteq\mathbb P^r$ and $Y\subseteq\mathbb P^{r-1}$ are nondegenerate projective varieties.

\item $S(X)=S(\mathbb P^r)/I(X)$ and $S(Y)=S(\mathbb P^{r-1})/I(Y)$ are homogeneous coordinate rings and homogeneous ideals of $X\subseteq\mathbb P^r$ and $Y\subseteq\mathbb P^{r-1}$, respectively.

\item We simply write $S=S(\mathbb P^r)$ and $R=S(\mathbb P^{r-1})$.

\item $V(I)$ is the projective scheme defined by a homogeneous ideal $I$.

\item For a graded module or ring $M$ we denote by $M_j$ its piece in degree $j$.

\item The missing entries in matrices and tables are usually assumed to be zero.
\end{itemize}

\bigskip

\section{Projection and unprojection}\label{secunpr}

\emph{Unprojection} is the reverse process of projection. In this section we outline projections from a point and associated unprojections from the perspective of \emph{Green's partial elimination theory} \cite{green1998generic}. For another approach to unprojections consult with \cite{reid2000graded}.

\bigskip

\subsection{Partial elimination theory}

Fix a point $z\in\mathbb P^r$ inside or outside $X\subseteq\mathbb P^r$, and consider the projection $\pi_z:\mathbb P^r\setminus z\to\mathbb P^{r-1}$ from $z$. In order to analyze geometry of the restricted map $\pi_z|_X:X\setminus z\to\mathbb P^{r-1}$ Green introduced \emph{partial elimination ideals} \cite{green1998generic}. We collect some information on them below and refer to \cite[Appendix]{choe2022matryoshka} for their applications.

Taking suitable homogeneous coordinates $x_0,x_1,\ldots,x_r\in S_1$ in $\mathbb P^r$ we may assume that the point $z\in\mathbb P^r$ is given as
$$
(x_0:x_1:\cdots:x_r)=(1:0:\cdots:0).
$$
One thus has $\pi_z(x_0:x_1:\cdots:x_r)=(x_1:\cdots:x_r)\in\mathbb P^{r-1}$. Set $R=\Bbbk[x_1,\ldots,x_r]\subset S$ so that it forms the homogeneous coordinate ring of $\mathbb P^{r-1}$ and satisfies $R[x_0]=S$. For an integer $i$ and the graded $R$-module
$$
\widetilde{K}_i(X,z)=\{f\in I(X):f\text{ has degree at most }i\text{ in }x_0\}
$$ 
the \emph{$i$-th partial elimination ideal} of the pair $(X,z)$ is defined to be
$$
K_i(X,z)=\{\text{coefficients of $x_0^i$ in any }f\in\widetilde{K}_i(X,z)\}\subseteq R.
$$
Note that $K_0(X,z)$ is nothing but the elimination ideal $I(X)\cap R$ of $I(X)\subseteq S$. Partial elimination ideals are homogeneous ideals that fit into both the short exact sequence
\begin{equation}\label{graded}
\xymatrix{
0 \ar[r] & \widetilde{K}_{i-1}(X,z) \ar[r] & \widetilde{K}_i(X,z) \ar[r] & K_i(X,z)(-i) \ar[r] & 0
}
\end{equation}
and the chain
\begin{equation}\label{chain}
K_0(X,z)\subseteq K_1(X,z)\subseteq\cdots\subseteq K_i(X,z)\subseteq\cdots.
\end{equation}

We write $X_z=V(K_0(X,z))\subseteq\mathbb P^{r-1}$. Then since $K_0(X,z)$ is prime without linear forms, the projective scheme $X_z\subseteq\mathbb P^{r-1}$ is a nondegenerate projective variety with $I(X_z)=K_0(X,z)$, and one can see that $X_z$ has support 
$$
X_z=\overline{\pi_z(X\setminus z)}\subseteq\mathbb P^{r-1}.
$$
The induced map 
$$
\pi_z|_X:X\setminus z\to X_z
$$ 
is called the \emph{projection} of $X$ from $z$.

On the other hand, we consider the homogeneous ideal 
$$
K_\infty(X,z)=\bigcup_{i=0}^\infty K_i(X,z)\subseteq R
$$
to which the chain \eqref{chain} stabilizes. If $z\in X$, then $\mathbb PC_zX=V(K_\infty(X,z))\subseteq\mathbb P^{r-1}$ is called the \emph{projectivized tangent cone} to $X$ at $z$, and if we forget its embedding in $\mathbb P^{r-1}$, then it describes intrinsic and local geometry of $X$ at $z$. For example, one sees that $\mathbb PC_zX=\textup{Proj}\,\bigoplus_{j=0}^\infty m_z^j/m_z^{j+1}$ for the maximal ideal $m_z$ of the local ring of $X$ at $z$, and if $z$ is a smooth point of $X$, then $\mathbb PC_zX$ is just the projectivized tangent space to $X$ at $z$. Note that $\mathbb PC_zX$ is known to be equidimensional with $\dim\mathbb PC_zX=\dim X-1$. However, if $z\not\in X$, then we have $K_\infty(X,z)=R$.

Partial elimination ideals have the following set-theoretic description.

\begin{Prop}[{\cite[Proposition 6.2]{green1998generic} and \cite[Proposition A.2]{choe2022matryoshka}}]\label{setdescription}
Let $Z_i=V(K_i(X,z))$ and $Z_\infty=V(K_\infty(X,z))$ in $\mathbb P^{r-1}$. Then the support of $Z_i$ is given as
$$
Z_i=\{p\in\mathbb P^{r-1}:\textup{length}(X\cap\pi_z^{-1}(p))>i\}\cup Z_\infty\subseteq\mathbb P^{r-1},
$$
where $\pi_z^{-1}(p)\cong\mathbb A^1$ is the fiber of $p\in\mathbb P^{r-1}$ under $\pi_z:\mathbb P^r\setminus z\to\mathbb P^{r-1}$. In particular if $z$ is not a vertex of $X$, then $\pi_z|_X:X\setminus z\to X_z$ is generically finite of degree 
$$
\deg\pi_z|_X=\min\{i\geq1:Z_i\neq X_z\}.
$$
\end{Prop}
This is related to the well-known degree formula
\begin{equation}\label{degformula}
\deg X=\deg\pi_z|_X\deg X_z+
\begin{cases}
\deg\mathbb PC_zX&\text{if }z\in X \\
0 &\text{if }z\not\in X
\end{cases}
\end{equation}
for the case where $\pi_z|_X:X\setminus z\to X_z$ is generically finite.

We specify the easiest but nontrivial case in this regard. First suppose that $\pi_z|_X:X\setminus z\to X_z$ is birational so that $K_0(X,z)\neq K_1(X,z)$. However the behavior of the partial elimination ideals $K_1(X,z)\subseteq K_2(X,z)\subseteq\cdots\subseteq K_\infty(X,z)$ is still mysterious in general. So it would be desirable that they are the same. It is worth noting that this actually happens for interesting classes of nondegenerate projective varieties. See \cite{han2015sharp} and \cite{choe2022matryoshka}. 

\begin{Def}
One says that $\pi_z|_X$ is \emph{simple} if it is birational together with $K_1(X,z)=K_\infty(X,z)$.
\end{Def}

Notice that if $\pi_z|_X$ is simple, then $z\in\mathbb P^r$ lies in $X\subset\mathbb P^r$.

\bigskip

\subsection{Unprojection}

Now let us introduce the notion of unprojection. It depends on a given \emph{fake linear form} that plays the role of the so-called \emph{unprojection variable}.

Let $Y\subseteq\mathbb P^{r-1}$ be a nondegenerate projective variety, and consider the fraction field of $S(Y)$ that is equal to the function field $\Bbbk(\widehat{Y})$ of the affine cone $\widehat{Y}\subseteq\mathbb A^r$ of $Y\subseteq\mathbb P^{r-1}$. Now we take account of an element $\lambda$ in the algebraic closure of $\Bbbk(\widehat{Y})$, and assume that the minimal polynomial of $\lambda$ over $\Bbbk(\widehat{Y})$ has the form
$$
\lambda^d+\frac{a_1}{f_1}\lambda^{d-1}+\cdots+\frac{a_d}{f_d},
$$
where $d\geq 1$ is the degree of $\lambda$ over $\Bbbk(\widehat{Y})$, and the $a_i\in S(Y)$ and $f_i\in S(Y)\setminus0$ are homogeneous with $\deg a_i=\deg f_i+i$ for all $i=1,\ldots,d$.

\begin{Def}
Let $Y$ and $\lambda$ be as above. If $\lambda\not\in S(Y)_1$, then one says that $\lambda$ is a \emph{fake linear form} on $Y\subseteq\mathbb P^{r-1}$. Two fake linear forms on $Y\subseteq\mathbb P^{r-1}$ are called \emph{conjugate} if so are they as algebraic elements over $\Bbbk(\widehat{Y})$.
\end{Def}

Fake linear forms induce unprojections in the following sense.

\begin{Thm}[Unprojections]\label{corres}
Fix homogeneous coordinates $x_0,x_1,\ldots,x_r$ on $\mathbb P^r$ so that $\mathbb P^{r-1}$ has homogeneous coordinates $x_1,\ldots,x_r$ under the projection $\pi_z:\mathbb P^r\setminus z\to\mathbb P^{r-1}$ from the point $z=(1:0:\cdots:0)\in\mathbb P^r$. Then for any nondegenerate projective variety $Y\subseteq\mathbb P^{r-1}$ and any integer $d\geq 1$ there is a one-to-one correspondence
\begin{multline*}
\{X\subset\mathbb P^r:X_z=Y\subseteq\mathbb P^{r-1}\textup{, and }\pi_z|_X:X\setminus z\to Y\textup{ has degree }d\} \\
\longleftrightarrow\{\textup{fake linear forms }\lambda\textup{ of degree }d\textup{ on }Y\subseteq\mathbb P^{r-1}\textup{ up to conjugacy}\},
\end{multline*}
where $X\subset\mathbb P^r$ stands for a nondegenerate projective variety. In addition under this correspondence if $\pi_z|_X:X\setminus z\to Y$ is birational, that is, $d=1$, then we have 
\begin{equation}\label{colon}
\frac{K_i(X,z)}{K_0(X,z)}=(f(a,f)^{i-1}:a^i)\subseteq S(Y)    
\end{equation}
for each $i\geq 1$ and any fractional expression $\lambda=a/f\in\Bbbk(\widehat{Y})$.
\end{Thm}

\begin{proof}
The correspondence maps are sketched as follows.
$$
\begin{array}{rcl}
X\subset\mathbb P^r & \longmapsto & x_0\in\Bbbk(\widehat{X}) \\
\textup{Proj}\,S(Y)[\lambda]\subset\mathbb P^r & \reflectbox{$\longmapsto$} & \lambda
\end{array}
$$
They are obviously inverses of each other.

Let $X\subset\mathbb P^r$ be a nondegenerate projective variety being considered. We claim that the element $x_0\in\Bbbk(\widehat{X})$ is a fake linear form of degree $d$ on $Y\subseteq\mathbb P^{r-1}$. Indeed since $\pi_z|X:X\setminus z\to Y$ is generically finite of degree $d$, the function field $\Bbbk(\widehat{X})$ of the affine cone $\widehat{X}\subset\mathbb A^{r+1}$ is a degree $d$ extension of $\Bbbk(\widehat{Y})$. Thus, $x_0\in\Bbbk(\widehat{X})$ has degree $d$ over $\Bbbk(\widehat{Y})$, for $\Bbbk(\widehat{X})=\Bbbk(\widehat{Y})(x_0)$. Note that $I(Y)\neq K_d(X,z)$ by \Cref{setdescription}. Pick a homogeneous form $f\in K_d(X,z)\setminus I(Y)$ so that $fx_0^d+a_1x_0^{d-1}+\cdots+a_d\in I(X)$ for some homogeneous forms $a_i\in R$ with $\deg a_i=\deg f+i$. Hence,
$$
x_0^d+\frac{a_1}{f}x_0^{d-1}+\cdots+\frac{a_d}{f}
$$
is the minimal polynomial of $x_0\in\Bbbk(\widehat{X})$ over $\Bbbk(\widehat{Y})$. Also since $X\subset\mathbb P^r$ is nondegenerate, one sees that $x_0\not\in S(Y)_1$.

On the other hand, let $\lambda$ be a fake linear form under discussion. Then consider a graded $R$-algebra $S(Y)[\lambda]$ in the algebraic closure of $\Bbbk(\widehat{Y})$, where $\lambda$ is regarded as a homogeneous element of degree $1$ in it, and define a graded $R$-algebra epimorphism
$$
\phi:S\to S(Y)[\lambda]\quad\text{by}\quad\phi(x_0)=\lambda.
$$
Notice that $\ker\phi\subset S$ is a homogeneous prime ideal and that it has no linear forms since $\lambda\not\in S(Y)_1$. Take $X\subset\mathbb P^r$ to be the nondegenerate projective variety defined by $\ker\phi$. Evidently we have $K_0(X,z)=I(Y)$, that is, $X_z=Y$. Also $\pi_z|_X:X\setminus z\to Y$ is generically finite of degree $d$, for $\Bbbk(\widehat{X})=\Bbbk(\widehat{Y})(\lambda)$ is a degree $d$ extension of $\Bbbk(\widehat{Y})$.

For the last assertion let $g\in K_i(X,z)/K_0(X,z)$ be an element so that $g\lambda^i+b_1\lambda^{i-1}+\cdots+b_i=0$ in $\Bbbk(\widehat{Y})[\lambda]$ for some homogeneous forms $b_i\in S(Y)$ with $\deg b_i=\deg g+i$. Substituting $a/f$ for $\lambda$ and clearing denominators we have $ga^i=-b_1a^{i-1}f\cdots-b_if^i\in f(a,f)^{i-1}$, hence $g\in(f(a,f)^{i-1}:a^i)$. So $K_i(X,z)/K_0(X,z)\subseteq(f(a,f)^{i-1}:a^i)$. Similarly, the reverse containment holds.
\end{proof}

For convenience we use the following notation. It is well defined up to projective equivalence.
\begin{Def}
After choosing a point $z\in\mathbb P^r$ if \Cref{corres} associates a nondegenerate projective variety $X\subset\mathbb P^r$ to a fake linear form $\lambda$ on $Y\subseteq\mathbb P^{r-1}$, then we write
$$
(X,z)=\Unpr(Y,\lambda)
$$
and call it the \emph{unprojection} of the pair $(Y,\lambda)$.
\end{Def}

On the other hand, to compute invariants of the unprojections we should usually determine when the partial elimination ideals \emph{stabilize}, that is, for which values of $i$ the equality $K_i(X,z)=K_\infty(X,z)$ holds. The following helps us to recognize whether an unprojection $(X,z)$ has $\pi_z|_X$ simple.

\begin{Prop}\label{degsimple}
Let $\lambda\in\Bbbk(\widehat{Y})$ be a fake linear form of degree $1$ on $Y\subseteq\mathbb P^{r-1}$ with unprojection $(X,z)=\Unpr(Y,\lambda)$, and fix a fractional expression $\lambda=a/f\in\Bbbk(\widehat{Y})$ with $a,f$ homogeneous. Take $Z_i=V(K_i(X,z))\subseteq\mathbb P^{r-1}$ and $W_i=V((a,f)^i)\subsetneq Y\subseteq\mathbb P^{r-1}$ for each $i\geq 1$.  With $n=\dim Y$ we set
$$
\chi_{n-1}(W)=
\begin{cases}
\deg W & \textup{if }\dim W=n-1 \\
0 & \textup{if }\dim W<n-1
\end{cases}
$$
for any projective scheme $W\subsetneq Y\subseteq\mathbb P^{r-1}$, and consider the sequence
$$
(d_0,d_1,d_2,\ldots)=(0,\chi_{n-1}(W_1),\chi_{n-1}(W_2),\ldots).
$$
Then this sequence is convex and eventually linear with slope $\deg f\deg Y-\chi_{n-1}(Z_\infty)$, where $Z_\infty=V(K_\infty(X,z))\subseteq\mathbb P^{r-1}$. Furthermore, if $S(Y)$ is Cohen--Macaulay, then the following are equivalent.
\begin{enumerate}
    \item\label{degsimple1} $\pi_z|_X$ is simple.
    \item\label{degsimple2} $Z_1$ and $Z_\infty$ have the same dimension and degree.
    \item\label{degsimple3} $(d_i)$ is linear.
    \item\label{degsimple4} $d_i\leq d_1\cdot i$ for all $i\geq2$.
    \item\label{degsimple5} $z\in X$, and $\pi(X)=\pi(Y)+\pi(\mathbb C_zX)+\deg\mathbb C_zX-1$, where $\pi$ means the sectional genus, and $\mathbb C_zX$ is the cone over $\mathbb PC_zX$ with vertex $z$.
\end{enumerate}
\end{Prop}

\begin{proof}
The formula \eqref{colon} induces the following diagram.
$$
\xymatrix{
0 \ar[r]	& \dfrac{R}{K_i(X,z)}(-i\deg a) \ar[r]^--{a^i} 			& \dfrac{S(Y)}{f(a,f)^{i-1}} \ar[r] \ar@{=}[d]	& \dfrac{S(Y)}{(a,f)^i} \ar[r]	& 0 \\
0 \ar[r]	& \dfrac{S(Y)}{(a,f)^{i-1}}(-\deg f) \ar[r]^--f				& \dfrac{S(Y)}{f(a,f)^{i-1}} \ar[r] 					& \dfrac{S(Y)}{(f)} \ar[r] 		& 0 
}
$$
From Hilbert polynomials of the graded $R$-modules in this diagram it follows that
\begin{equation}\label{difference}
\chi_{n-1}(Z_i)=\deg f\deg Y-(d_i-d_{i-1})
\end{equation}
for every integer $i\geq 1$. Refer to the fact that
$$
\chi_{n-1}(Z_1)\geq\chi_{n-1}(Z_2)\geq\cdots\geq\chi_{n-1}(Z_\infty)\geq0.
$$
Therefore, what remains to check is the last assertion. 

We prove the equivalence of \eqref{degsimple1} and \eqref{degsimple2}. Let $\overline{K_1}$ and $\overline{K_\infty}$ be the images of $K_1(X,z)$ and $K_\infty(X,z)$, respectively, in $S(Y)$. Since $\overline{K_1}=((f):a)$ by \Cref{corres}, it is unmixed in the Cohen--Macaulay ring $S(Y)$. Consider the short exact sequence
$$
\xymatrix{
0 \ar[r] & \overline{K_\infty}/\overline{K_1} \ar[r] & S(Y)/\overline{K_1} \ar[r] & S(Y)/\overline{K_\infty} \ar[r] & 0
}
$$
and the containment 
$$
\textup{Ass}_{S(Y)}(\overline{K_\infty}/\overline{K_1})\subseteq\textup{Ass}_{S(Y)}(S(Y)/\overline{K_1})
$$ 
of sets of associated primes. One finds that $\overline{K_\infty}/\overline{K_1}=0$ if and only if $\chi_{n-1}(Z_1)=\chi_{n-1}(Z_\infty)$. Also, the statements \eqref{degsimple2}, \eqref{degsimple3}, and \eqref{degsimple4} are equivalent because of \eqref{difference} and the convexity of $(d_i)$. For the equivalence of \eqref{degsimple5} and the others refer to a formula \cite[Theorem A.4(2)]{choe2022matryoshka}.
\end{proof}

\bigskip

\section{Regularity and other invariants}\label{secinvariant}

This section is devoted to computing invariants by using the partial elimination theory. Let $M$ be a finitely generated graded $S$-module with minimal free resolution
$$
\xymatrix{
F_0 & F_1 \ar[l] & \cdots \ar[l] & F_i \ar[l] \ar@{=}[d] & \cdots. \ar[l] \\
&&& {\displaystyle\bigoplus_{j\in\mathbb Z}}S^{\beta_{i,j}(M)}(-i-j) &
}
$$
The exponents $\beta_{i,j}(M)$ are called \emph{graded Betti numbers} of $M$ and encode basic information of generators, relations, and \emph{syzygies} of $M$. Notice that by nature of Tor functors we have
$$
\beta_{i,j}(M)=\dim_\Bbbk\Tor_i^S(M,\Bbbk)_{i+j},
$$
where $\Bbbk$ denotes the residue field of $S$, and $\Tor_i^S(M,\Bbbk)$ can be computed as the (co)homology group of the Koszul type complex
$$
\xymatrix{
\bigwedge^{i+1}S_1\otimes M(-i-1) \ar[r] & \bigwedge^iS_1\otimes M(-i) \ar[r]^--d & \bigwedge^{i-1}S_1\otimes M(-i+1)
}
$$
at the middle, where the differential $d$ is given by combining the natural injection
$$
\bigwedge^iS_1\hookrightarrow\bigwedge^{i-1}S_1\otimes S_1
$$
and the multiplication map
$$
S_1\otimes M(-i)\to M(-i+1).
$$

\begin{Def}
Let $z\in\mathbb P^r$ be a point. We define the \emph{$i$-th partial elimination module} of $(X,z)$ to be
$$
M_i(X,z)=\frac{K_i(X,z)}{K_{i-1}(X,z)}(-i).
$$
\end{Def}

Syzygies of the $M_i(X,z)$ are building blocks of those of $I(X)$.

\begin{Thm}[cf.\ {\cite[Proposition 6.6]{green1998generic}}]\label{spectral}
Let $z\in\mathbb P^r$ be a point. Then there is a spectral sequence $E$ of graded $R$-modules such that
$$
E^1_{p,q}=\Tor^R_{p+q}(M_p(X,z),\Bbbk)\Rightarrow \Tor^S_{p+q}(I(X),\Bbbk).
$$
\end{Thm}

\begin{proof}
Pick homogeneous coordinates $x_0,x_1,\ldots,x_r$ in $\mathbb P^r$ such that $z=(1:0:\cdots:0)\in\mathbb P^r$.
By a well-known Lefschetz type theorem we have
$$
\Tor^S_\ast(I(X),\Bbbk)\cong\Tor^R_\ast(I(X)/x_0I(X),\Bbbk).
$$
Thus, it suffices to take account of the graded $R$-module $I(X)/x_0I(X)$. 

Now let us see the following Koszul type complex and its ascending filtration:
$$
C_i=\bigwedge^iR_1\otimes\frac{I(X)}{x_0I(X)}(-i)\quad\text{and}\quad F_pC_i=\bigwedge^iR_1\otimes\frac{\widetilde{K}_p(X,z)}{x_0\widetilde{K}_{p-1}(X,z)}(-i).
$$ 
This filtration is given by the short exact sequence
$$
\xymatrix{
0 \ar[r] & \dfrac{\widetilde{K}_{p-1}(X,z)}{x_0\widetilde{K}_{p-2}(X,z)} \ar[r] & \dfrac{\widetilde{K}_p(X,z)}{x_0\widetilde{K}_{p-1}(X,z)} \ar[r] & M_p(X,z) \ar[r] & 0
}
$$
that comes from the short exact sequence (\ref{graded}). Consider the spectral sequence $E$ induced by the filtered complex $C_\ast$. One has
\begin{align*}
E^0_{p,q} & = G_pC_{p+q}=\bigwedge^{p+q}R_1\otimes M_p(X,z)(-p-q), \\
E^1_{p,q} & = H_q(G_pC_{p+\ast})=\Tor^R_{p+q}(M_p(X,z),\Bbbk),\text{ and} \\
E^\infty_{p,q} & = G_pH_{p+q}(C_\ast)=G_p\Tor^R_{p+q}(I(X)/x_0I(X),\Bbbk),
\end{align*}
where $G$ means the associated graded object. We have constructed the desired spectral sequence.
\end{proof}

\begin{Rmk}
Take integers $i$ and $k$, and consider the spectral sequence $E$ above. Then the differential
$$
E^1_{k,i-k}=\Tor_i^S(M_k(X,z),\Bbbk)\to E^1_{k-1,i-k}=\Tor_{i-1}^S(M_{k-1}(X,z),\Bbbk)
$$
is derived from a short exact sequence of the following form.
$$
\xymatrix{
0 \ar[r] & M_{k-1}(X,z) \ar[r] & \dfrac{\widetilde{K}_k(X,z)}{x_0\widetilde{K}_{k-1}(X,z)+\widetilde{K}_{k-2}(X,z)} \ar[r] & M_k(X,z) \ar[r] & 0
}
$$
This is connected to the \emph{Griffiths--Harris second fundamental form} for $X$ at $z$ \cite{griffiths1979algebraic}. Refer to \cite[Definition 6.12 and Example 6.13]{green1998generic}.
\end{Rmk}

We suggest a simple trick for partial computation of the spectral sequence above. We first adopt some terminology.
\begin{Def}
A \emph{table} is a $\mathbb Z\times\mathbb Z$ matrix of nonnegative integers. For instance the \emph{Betti table} $\mathbb B(M)$ of a finitely generated graded $S$-module $M$ is a table defined by
$$
\mathbb B(M)_{i,j}=\beta_{i,j}(M).
$$
A \emph{table sequence} is a collection of tables indexed by $\mathbb Z$. For a table sequence $\mathcal B=(B_k:k\in\mathbb Z)$ its \emph{cancellation} is any table sequence that can be obtained from $\mathcal B$ by repeating the following operation a finite number of times. For a tuple $(i,j,k,l)$ of integers with $l>0$ subtract a common nonnegative integer from both
$$
(B_k)_{i,j}\quad\text{and}\quad(B_{k-l})_{i-1,j+1}.
$$
For a table $B$ its \emph{decomposition} is a table sequence $(B_k:k\in\mathbb Z)$ such that for each $(i,j)\in\mathbb Z\times\mathbb Z$ the entries $(B_k)_{i,j}$ sum up to $B_{i,j}$.
\end{Def}

Then the convergence of the spectral sequence above can be roughly described as follows.

\begin{Prop}[Cancellation principle]\label{cancel}
For any fixed point $z\in\mathbb P^r$ the Betti table $\mathbb B(I(X))$ has a decomposition that is a cancellation of the table sequence $(\mathbb B(M_k(X,z)):k\in\mathbb Z)$.
\end{Prop}

\begin{proof}
For convenience write
$$
K^l_{i,j,k}=(E^l_{k,i-k})_{i+j}\quad\text{and}\quad K^\infty_{i,j,k}=(E^\infty_{k,i-k})_{i+j}
$$
for the spectral sequence $E$ in \Cref{spectral}. Now the differentials of $E$ induce the complexes
$$
\xymatrix{
\cdots \ar[r] & K^l_{i+1,j-1,k+l} \ar[r] & K^l_{i,j,k} \ar[r] & K^l_{i-1,j+1,k-l} \ar[r] & \cdots
}
$$
such that
\begin{enumerate}
    \item the cohomology group at $K^l_{i,j,k}$ is $K^{l+1}_{i,j,k}$,
    \item the $K_{i,j,k}^l$ stabilize to $K_{i,j,k}^\infty$ as $l$ increases, 
    \item $\dim_\Bbbk K_{i,j,k}^1=\beta_{i,j}(M_k(X,z))$, and
    \item $\sum_{k\in\mathbb Z}\dim_\Bbbk K_{i,j,k}^\infty=\beta_{i,j}(I(X))$.
\end{enumerate}
For the tables 
$$
B^l_k=(\dim_\Bbbk K^l_{i,j,k}:(i,j)\in\mathbb Z\times\mathbb Z)\quad\text{and}\quad B^\infty_k=(\dim_\Bbbk K^\infty_{i,j,k}:(i,j)\in\mathbb Z\times\mathbb Z)
$$ 
consider the table sequences 
$$
\mathcal B^l=(B^l_k:k\in\mathbb Z)\quad\text{and}\quad\mathcal B^\infty=(B^\infty_k:k\in\mathbb Z).
$$ 
By the above properties of the $K_{i,j,k}^l$ the table sequence
$$
\mathcal B^1=(\mathbb B(M_k(X,z)):k\in\mathbb Z)
$$ 
is a cancellation of a decomposition $\mathcal B^\infty$ of $\mathbb B(I(X))$.
\end{proof}

For the cancellation principle the following can be regarded as the first nontrivial example.

\begin{Ex}
Let $E\subset\mathbb P^3$ be an elliptic normal curve of degree $4$, and take a (general) point $z\in E$. Then
\begin{enumerate}
\item $E_z\subset\mathbb P^2$ is a cubic plane curve, and

\item $K_1(E,z)$ is the homogeneous ideal of a point in $\mathbb P^2$ with $K_1(E,z)\supsetneq K_0(E,z)$.
\end{enumerate}
We have
$$
\mathbb B(M_0(E,z))=
\begin{array}{c|cc}
j\backslash i	&0	&1 \\ \hline
2	&	&	\\
3	&1	&
\end{array}
,\quad\mathbb B(M_1(E,z))=
\begin{array}{c|cc}
j\backslash i	&0	&1 \\ \hline
2	&2	&1	\\
3	&	&1
\end{array}
,\quad\text{and}\quad\mathbb B(M_k(E,z))=0
$$
for all $k\geq 2$. However, any two linearly independent quadratic forms in $I(E)$ do not have nontrivial relations so that the cancellation in \Cref{cancel} must occur between $\beta_{1,2}(M_1(E,z))$ and $\beta_{0,3}(M_0(E,z))$. Therefore, we have shown that the decomposition of $\mathbb B(I(E))$ induced by the cancellation principle consists of $\mathbb B(I(E))$ itself and zero tables. Note that $E\subset\mathbb P^3$ is a complete intersection of two quadrics.
$$
\mathbb B(I(E))=
\begin{array}{c|cc}
j\backslash i	&0	&1 \\ \hline
2	&2	&	\\
3	&	&1
\end{array}
$$
\end{Ex}

Let $M$ be a finitely generated graded $S$-module. Recall that the \emph{regularity} of $M$ relates to the height of the Betti table $\mathbb B(M)=(\beta_{i,j}(M):(i,j)\in\mathbb Z\times\mathbb Z)$ when the rows (resp.\ columns) are indexed by $j$ (resp.\ $i$), that is,
$$
\reg M=\max\{j\in\mathbb Z:\beta_{i,j}(M)\neq 0\textup{ for some }i\in\mathbb Z\}.
$$
But the \emph{projective dimension} of $M$, denoted by $\pd M$, is known to measure the width of $\mathbb B(M)$, which means that
$$
\pd M=\max\{i\in\mathbb Z:\beta_{i,j}(M)\neq0\textup{ for some }j\in\mathbb Z\}.
$$
This satisfies the \emph{Auslander--Buchsbaum formula}
$$
\depth M+\pd M=\depth S,
$$
where $\depth M$ is the \emph{depth} of the irrelevant ideal on $M$.
\begin{Def}
One says that $M_i(X,z)$ \emph{dominates the others}
\begin{enumerate} 
\item \emph{by the regularity} if
\begin{enumerate}
\item $\reg M_i(X,z)\geq\reg M_j(X,z)$ for every $j<i$, and if

\item $\reg M_i(X,z)\geq\reg M_j(X,z)+2$ for every $j>i$, and
\end{enumerate}

\item \emph{by the projective dimension} if
\begin{enumerate}
\item $\pd M_i(X,z)\geq\pd M_j(X,z)+2$ for every $j<i$, and if

\item $\pd M_i(X,z)\geq\pd M_j(X,z)$ for every $j>i$.
\end{enumerate}
\end{enumerate}
\end{Def}

With these conditions the cancellation principle directly implies the result below.

\begin{Prop}\label{dominate}
Let $i\geq0$ be an integer.
\begin{enumerate}
\item If $M_i(X,z)$ dominates the others by the regularity, then we have
$$
\reg X=\reg M_i(X,z).
$$

\item If $M_i(X,z)$ dominates the others by the projective dimension, then we have 
$$
\pd S(X)=\pd M_i(X,z)+1.
$$
\end{enumerate}
\end{Prop}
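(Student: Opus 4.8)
The plan is to read both equalities off the cancellation principle (\Cref{cancel}), which provides a decomposition $\mathbb B(I_X)=\sum_{k\in\mathbb Z}B_k$ whose table sequence $(B_k)$ is obtained from $(\mathbb B(M_k(X,z)))$ by cancellations. I will index each Betti table by $(a,b)$, so that $a$ is the homological (column) index and $b$ the degree (row) index; thus $\reg$ is the largest occupied $b$ and $\pd$ the largest occupied $a$. Two features of cancellations are all I need. First, a cancellation only subtracts from entries, so $(B_k)_{a,b}\le\beta_{a,b}(M_k(X,z))$ at every stage, and a position that is zero in the starting sequence stays zero forever (one cannot subtract a positive integer from it and remain nonnegative). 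Second, each elementary cancellation couples a position $(a,b)$ in some $B_k$ with the position $(a-1,b+1)$ in some $B_{k-l}$ with $l>0$.

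For the upper bounds the first feature alone suffices. Since $(\mathbb B(I_X))_{a,b}\le\sum_k\beta_{a,b}(M_k(X,z))$, a nonzero entry of $\mathbb B(I_X)$ in row $b$ (resp.\ column $a$) forces $\beta_{a,b}(M_k(X,z))\ne0$ for some $k$, hence $b\le\reg M_k(X,z)$ (resp.\ $a\le\pd M_k(X,z)$). Each domination hypothesis makes $M_i(X,z)$ attain the maximum, i.e.\ $\reg M_i(X,z)=\max_k\reg M_k(X,z)$ (resp.\ $\pd M_i(X,z)=\max_k\pd M_k(X,z)$), so $\reg X\le\reg M_i(X,z)$ and $\pd I_X\le\pd M_i(X,z)$. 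Reading $\pd S_X=\pd I_X+1$ off the sequence $0\to I_X\to S\to S_X\to0$ turns the latter into $\pd S_X\le\pd M_i(X,z)+1$.

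The substance is the reverse inequality, for which I would exhibit a single extremal Betti number of $M_i(X,z)$ that no cancellation can touch. For part (1), fix $(a_0,q_0)$ with $\beta_{a_0,q_0}(M_i(X,z))\ne0$ and $q_0=\reg M_i(X,z)$, and follow $(B_i)_{a_0,q_0}$. By the coupling rule it can be decreased only if it appears as the $(a,b)$-member of a pair with $(k,a,b)=(i,a_0,q_0)$, whose partner then lies at $(a_0-1,q_0+1)$, or as the $(a-1,b+1)$-member with $k=i+l$, whose partner then lies at $(a_0+1,q_0-1)$ in $B_{i+l}$ for some $l>0$. The first partner sits in row $q_0+1$, which is empty because $q_0$ is the top occupied row of the entire sequence; the second sits in row $q_0-1$ of $B_{i+l}$, which is empty because condition (1)(b) gives $\reg M_{i+l}(X,z)\le q_0-2$. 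As both partner slots are identically zero, $(B_i)_{a_0,q_0}$ keeps its initial nonzero value, so $(\mathbb B(I_X))_{a_0,q_0}>0$ and $\reg X\ge q_0$.

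Part (2) is the same argument with the roles of $a$ and $b$ interchanged. Fixing $(p_1,b_1)$ with $\beta_{p_1,b_1}(M_i(X,z))\ne0$ and $p_1=\pd M_i(X,z)$, the entry $(B_i)_{p_1,b_1}$ can be decreased only through a partner at $(p_1-1,b_1+1)$ in some $B_{i-l}$, $l>0$, or at $(p_1+1,b_1-1)$ in some $B_{i+l}$, $l>0$. The former lies in column $p_1-1$, emptied by condition (2)(a) via $\pd M_{i-l}(X,z)\le p_1-2$; the latter lies in column $p_1+1$, empty because $p_1$ is the largest occupied column of the sequence. Hence $\pd I_X\ge p_1$ and $\pd S_X\ge p_1+1$, matching the upper bound. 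The only delicate point, and the one I expect to need the most careful wording, is this survival step: it depends squarely on cancellations never increasing entries, so that the partner slots — forced to vanish either by extremality of the row/column or by the numerical gap of $2$ in the domination hypotheses — can never become available at a later stage of the process.
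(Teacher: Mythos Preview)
Your proposal is correct and is exactly the argument the paper has in mind: the paper gives no proof beyond the sentence ``the cancellation principle (\Cref{cancel}) directly implies the result below,'' and what you have written is precisely the routine unpacking of that claim. Your identification of the surviving extremal entry, and the observation that the gap of $2$ in the domination hypotheses is what kills the only possible partner slot on the ``wrong'' side, is the intended mechanism.
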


The following tells us about invariants of an unprojection provided that the associated projection is simple.

\begin{Cor}\label{compu}
Let $\lambda$ be a fake linear form on $Y\subseteq\mathbb P^{r-1}$, and suppose that $\pi_z|_X$ is simple for the unprojection $(X,z)=\Unpr(Y,\lambda)$. Fix a fractional expression $\lambda=a/f\in\Bbbk(\widehat{Y})$ with $a,f$ homogeneous, and put $T=I(Y)+(a,f)\subset R$.
\begin{enumerate}
\item\label{compu1} For the projective scheme $W=V(T)\subseteq\mathbb P^{r-1}$ if $\dim W=\dim Y-1$, then
$$
\deg X=(\deg f+1)\deg Y-
\begin{cases}
\deg W & \textup{if }\dim W=\dim Y-1 \\
0 & \textup{if }\dim W<\dim Y-1.
\end{cases}
$$

\item\label{compu2} If $\reg T>\reg Y+\deg f$, then
$$
\reg X=\reg T-\deg f+1.
$$

\item\label{compu3} If $\pd R/T>\pd S(Y)+2$, then
$$
\pd S(X)=\pd R/T-1.
$$
\end{enumerate}
\end{Cor}

\begin{proof}
We begin with the following diagram with exact rows and column. For the column refer to the formula \eqref{colon}.
$$
\xymatrix{
&&& 0 \ar[d] \\
0 \ar[r] & M_1(X,z) \ar[r] & S(Y)(-1) \ar[r] & \dfrac{R}{K_1(X,z)}(-1) \ar[r] \ar[d]^--a & 0 \\
0 \ar[r] & S(Y) \ar[r]^--f & S(Y)(\deg f) \ar[r] & \dfrac{S(Y)}{(f)}(\deg f) \ar[r] \ar[d] & 0 \\
&&& \dfrac{R}{T}(\deg f) \ar[d] \\
&&& 0
}
$$

\eqref{compu1} It is by the degree formula (\ref{degformula}). Note that $K_1(X,z)$ is equal to $K_\infty(X,z)$ with $z\in X$, and so the column above computes the degree of $\mathbb PC_zX$. \eqref{compu2} Tensoring the diagram above with the residue field $\Bbbk$ of $R$ we see that under the assumption $M_1(X,z)$ dominates the others by the regularity. Apply \Cref{dominate}. \eqref{compu3} Similarly in this case $M_1(X,z)$ dominates the others by the projective dimension.
\end{proof}

\begin{Rmk}
Notice that if $Y\subset\mathbb P^{r-1}$ is a hypersurface, then the homogeneous ideal $T\subset R$ above becomes an ideal with three generators, namely a \emph{three-generated ideal}. Such ideals have a wide range of complexity as shown in \cite{bruns1976jede}. This is a substantial basis for the construction of our counterexamples to the regularity conjecture.
\end{Rmk}

A similar thing holds for a \emph{general} fake linear form of degree $1$.

\begin{Cor}\label{generalcompu}
Suppose that $\depth S(Y)\geq 2$. Take a regular sequence $f,a$ of homogeneous forms on $S(Y)$ with $\deg f=\deg a-1\geq 1$, and let $\lambda=a/f\in\Bbbk(\widehat{Y})$ be the induced fake linear form of degree $1$ on $Y\subseteq\mathbb P^{r-1}$. Then for the unprojection $(X,z)=\Unpr(Y,\lambda)$ the projection $\pi_z|_X$ is simple, and $X\subset\mathbb P^r$ has invariants
$$
\deg X=(\deg f+1)\deg Y,\quad\reg X=\reg Y+\deg f,\quad\text{and}\quad\pd S(X)=\pd S(Y)+1.
$$
\end{Cor}

\begin{proof}
The formula \eqref{colon} says that
$$
\frac{K_i(X,z)}{K_0(X,z)}=(f(a,f)^{i-1}:a^i)\subseteq((f):a^i)=(f)
$$
for every $i\geq 1$ with equality when $i=1$. Thus, it follows that $\pi_z|_X:X\setminus z\to Y$ is simple together with
$$
M_1(X,z)\cong S(Y)(-\deg f-1).
$$
By \Cref{compu} we have $\deg X=(\deg f+1)\deg Y$. Since $\reg M_1(X,z)=\reg S(Y)(-\deg f-1)=\reg S(Y)+\deg f+1=\reg Y+\deg f\geq\reg M_0(X,z)$, \Cref{dominate} tells us that $\reg X=\reg M_1(X,z)=\reg Y+\deg f$. For the projective dimension part, we use the cancellation principle. Note that $\pd M_1(X,z)=\pd S(Y)=\pd I(Y)+1=\pd M_0(X,z)+1$. Put $p=\pd M_1(X,z)$ and $q=\max\{j\in\mathbb Z:\beta_{p,j}(M_1(X,z))\neq0\}$ so that $\beta_{p,q}(M_1(X,z))>0$. We however obtain $\pd M_0(X,z)=\pd S(Y)-1=\pd M_1(X,z)-1=p-1$ and $\beta_{p-1,q+1}(M_0(X,z))=\beta_{p,q}(S(Y))=\beta_{p,q+\deg f+1}(M_1(X,z))=0$. We are done.
\end{proof}

\bigskip

\section{Families of homogeneous ideals with high regularity}\label{secfamily}

In this section we focus on sequences of homogeneous ideals whose values of the regularity grow fast.

\begin{Ex}\label{Caviglia}
In \cite[Section 4.2]{caviglia2004koszul} Caviglia considered homogeneous ideals
$$
I_d=(z_2^{d-1}y_1-y_2^{d-1}z_1,y_1^d,z_1^d)\subset\Bbbk[y_1,z_1,y_2,z_2].
$$
Finding its Gr{\"o}bner basis with respect to the reverse lexicographic order he showed that
\begin{equation}\label{reg2}
\reg I_d=d^2-1   
\end{equation}
for every $d\geq 2$. Furthermore, for the homogeneous ideals
$$
I_{k,d}=(z_{i+1}^{d-1}y_i-y_{i+1}^{d-1}z_i:i=1,\ldots,k-1)+(y_1^d,z_1^d)\subset\Bbbk[y_1,z_1,\ldots,y_k,z_k]
$$
he expected with computational evidence that for each $k\geq 3$ there exists some univariate (rational) polynomial $p_k$ of order $k$ such that
$$
\reg I_{k,d}=p_k(d).
$$
Notice that in \cite[Remark 2.7]{borna2015some} Borna and Mohajer made the same claim for the homogeneous ideals 
$$
J_{k,d}=(z_{i+1}^{d-1}y_i-y_{i+1}^{d-1}z_i,y_i^d,z_i^d:i=1,\ldots,k-1)\subset\Bbbk[y_1,z_1,\ldots,y_k,z_k].
$$
They also observed that
\begin{equation}\label{reg3}
\reg J_{3,d}=d^3-3d^2+5d-3   
\end{equation}
and presumed that
\begin{equation}\label{reg4}
\reg J_{4,d}=d^4-4d^3+5d^2+d-3.  
\end{equation}
\end{Ex}

\begin{Ex}\label{BMNSSS}
In \cite{beder2011ideals} Beder, McCullough, N{\'u}{\~n}ez-Betancourt, Seceleanu, Snapp, and Stone constructed interesting three-generated homogeneous ideals denoted by $I_{2,(2,\ldots,2,1,d)}$ and raised the speculation that for the $k$-tuple $(2,\ldots,2,1,d)$ the estimate
$$
\reg I_{2,(2,\ldots,2,1,d)}=\Omega(d^k)
$$
holds. With minor modifications these ideals are formed as follows.

Let $k\geq2$ and $d\geq 0$ be integers. We first set
$$
F_{k-1}=y_{k-1}^{d+2},\quad G_{k-1}=z_k^{d+1}y_{k-1}-y_k^{d+1}z_{k-1}\quad\text{and}\quad H_{k-1}=z_{k-1}^{d+2},
$$
and then inductively put
$$
F_i=y_i^{d+2k-2i},\quad G_i=F_{i+1}y_i^2+G_{i+1}y_iz_i+H_{i+1}z_i^2\quad\text{and}\quad H_i=z_i^{d+2k-2i}
$$
for each $i=k-2,k-3,\ldots,1$. Now the homogeneous ideal $I_{2,(2,\ldots,2,1,d)}$ is defined by
$$
I_{2,(2,\ldots,2,1,d)}=(F_1,G_1,H_1)\subset\Bbbk[y_1,z_1,\ldots,y_k,z_k].
$$ 
\end{Ex}

The following partially verifies Borna--Mohajer's assertion in \Cref{Caviglia} and is a foundation for our counterexamples. Its proof is inspired by that of \cite[Proposition 7.1]{caviglia2019regularity}.

\begin{Thm}\label{family}
Fix an integer $k\geq 2$, and let $b_1,c_1,a_2,\ldots,b_{k-1},c_{k-1},a_k\geq 1$ be integers satisfying $1\leq a_i<\min\{b_i,c_i\}$ for all $i=2,\ldots,k-1$. Then the homogeneous ideal
$$
I=(z_{i+1}^{a_{i+1}}y_i-y_{i+1}^{a_{i+1}}z_i,y_i^{b_i},z_i^{c_i}:i=1,\ldots,k-1)\subset Q=\Bbbk[y_1,z_1,\ldots,y_k,z_k]
$$
has invariants
\begin{equation}\label{regk}
\reg I\geq (\min\{b_1,c_1\}-1)a_2\cdots a_k-\sum_{i=3}^ka_i\cdots a_k+\sum_{i=2}^ka_i+\max\{b_1,c_1\}-1
\end{equation}
and $\pd Q/I=2k$.
\end{Thm}

\begin{proof}
Put $A=\Bbbk[y_k,z_k]\subset Q$, write $a_1=\max\{b_1,c_1\}$, and let $M\subseteq Q/I$ be the $A$-submodule generated by all the monomials
$$
m(d_1,\ldots,d_{k-1})=\prod_{i=1}^{k-1}y_i^{a_i-d_i}z_i^{d_i},
$$
where $0\leq d_i\leq a_i$ for all $1\leq i\leq k-1$. It is an $A$-direct summand of $Q/I$ since the monomials have the same degree in $y_i$ and $z_i$ for each $1\leq i\leq k-1$. Note that $Q/I$ is finitely generated as an $A$-module, which implies that the $Q$- and $A$-module structures on $Q/I$ give the same local cohomology groups. Due to vanishing of local cohomology groups (see \cite[Corollary 4.5 and Proposition A1.16]{eisenbud2005geometry}) we have
$$
\reg I=\reg Q/I+1\geq\reg M+1\quad\text{and}\quad\pd Q/I\geq\pd M+2k-2.
$$
Thanks to these inequalities it would be enough to compute $\reg M$ and $\pd M$.

Let us find the minimal free resolution of $M$. For integers $0\leq d\leq\alpha$, where
$$
\alpha=a_1\cdots a_{k-1}+a_2\cdots a_{k-1}+\cdots+a_{k-1},
$$
we set
$$
m_d=m(d_1,\ldots,d_{k-1})
$$
when 
$$
d=d_1a_2\cdots a_{k-1}+d_2a_3\cdots a_{k-1}+\cdots+d_{k-1}
$$
with integers $0\leq d_i\leq a_i$ for all $1\leq i\leq k-1$. They are well-defined by the ``carry-borrow" rule
$$
m(\ldots,d_i,d_{i+1},\ldots)=m(\ldots,d_i+1,0,\ldots)
$$
for the case where $d_{i+1}=a_{i+1}$ and $d_i<a_i$, which is induced by the generators $z_{i+1}^{a_{i+1}}y_i-y_{i+1}^{a_{i+1}}z_i\in I$ with $1\leq i\leq k-2$. For a similar reason we find that
\begin{equation}\label{rel}
m_dz_k^{a_k}=m_{d+1}y_k^{a_k}    
\end{equation}
for any $0\leq d<\alpha$. Notice that if either $d\leq\alpha-\beta$ or $\gamma\leq d$ holds with
$$
\beta=b_1a_2\cdots a_{k-1}\quad\textup{and}\quad\gamma=c_1a_2\cdots a_{k-1},
$$
then $m_d=0$ in $M$, for $z_1^{c_1}$ and $y_1^{b_1}$ are members of $I$. We claim that $m_{\alpha-\beta+1},\ldots,m_{\gamma-1}\in M$ are nonzero generators whose $A$-linear relations
$$
f=\sum_{d=\alpha-\beta+1}^{\gamma-1}f_dm_d\in I
$$
are spanned by the relations \eqref{rel}.

Indeed, comparing the degrees of $f$ and the generators of $I$ in $y_i$ and $z_i$ for each $2\leq i\leq k-1$ one sees that 
$$
f\in(z_{i+1}^{a_{i+1}}y_i-y_{i+1}^{a_{i+1}}z_i:1\leq i\leq k-1)+(y_1^{b_1},z_1^{c_1}).
$$
Plug
$$
y_i=y_{k-1}^{a_{i+1}\cdots a_{k-1}}\quad\text{and}\quad z_i=z_{k-1}^{a_{i+1}\cdots a_{k-1}}
$$
into the $m_d$ and $f$ for all $1\leq i\leq k-2$ so that we reach the images
$$
\overline{m_d}=y_{k-1}^{\alpha-d}z_{k-1}^d\quad\textup{and}\quad\overline{f}=\sum_df_dy_{k-1}^{\alpha-d}z_{k-1}^d\in (z_k^{a_k}y_{k-1}-y_k^{a_k}z_{k-1},y_{k-1}^\beta,z_{k-1}^\gamma)
$$
in $A[y_{k-1},z_{k-1}]$. Our claim has been shown.

Consequently, by letting $N=\beta+\gamma-\alpha-1$ the minimal free resolution of $M$ has differentials
$$
\begin{pmatrix}
-z_k^{a_k} 	& y_k^{a_k} 	& 			& 			& 				\\
			& -z_k^{a_k}	& y_k^{a_k}& 			& 				\\
			& 				& \ddots	& \ddots 	& 				\\
			& 				& 			& -z_k^{a_k}& y_k^{a_k} 			
\end{pmatrix}
\quad\text{and}\quad
\begin{pmatrix}
(y_k^{a_k})^N \\
(y_k^{a_k})^{N-1}z_k^{a_k} \\
(y_k^{a_k})^{N-2}(z_k^{a_k})^2 \\
\vdots \\
(z_k^{a_k})^N
\end{pmatrix}
,
$$
and the Betti table of $M$ looks like
$$
\begin{array}{c|ccc}
j\backslash i 	&0			&1			&2			\\ \hline
j_0				&N		&			&			\\
j_1				&			&N+1	&			\\
j_2				&			&			&1
\end{array}
$$
in the setting of
\begin{enumerate}
\item $j_0=\sum_{i=1}^{k-1}a_i$,

\item $j_1=j_0+a_k-1=\sum_{i=1}^ka_i-1$, and 

\item $j_2=j_1+a_kN-1=(\beta+\gamma-\alpha-1)a_k+\sum_{i=1}^ka_i-2$.
\end{enumerate}
We are done.
\end{proof}

\begin{Rmk}
If 
$$
(b_1,c_1,a_2,\ldots,b_{k-1},c_{k-1},a_k)=(d,d,d-1,\ldots,d,d,d-1)
$$
for an integer $d\geq 2$, then the inequality \eqref{regk} reads
$$
\reg I\geq(d-1)^k-\sum_{i=1}^{k-2}(d-1)^i+k(d-1).
$$
Therefore, we have verified the lower bound versions of \eqref{reg2}, \eqref{reg3}, and \eqref{reg4}, which becomes evidence that the inequality \eqref{regk} is sharp.
\end{Rmk}

Generalizing a technique of Caviglia, Chardin, McCullough, Peeva, and Varbaro we establish sequences of three-generated homogeneous ideals with the same growth rates of regularity as those from \Cref{family}.

\begin{Prop}[{cf.\ \cite[Proposition 7.1]{caviglia2019regularity}}]\label{fixing}
Let $Q\subset R$ be a polynomial ring obtained by eliminating two variables in $R$, say $y_0$ and $z_0$, and take a homogeneous ideal $U=(g_0,\ldots,g_s)\subset Q$ and positive integers $\Delta$ and $\delta$. Suppose that
\begin{enumerate}
\item $g_s$ is nonzero,

\item $\deg g_i=mi+\deg g_0$ for a common integer $m\geq 0$, and

\item the inequalities $\Delta>s$ and $\delta>(m+1)s$ hold.
\end{enumerate}
We put
$$
G=g_sy_0^s+g_{s-1}y_0^{s-1}z_0^{m+1}+\cdots+g_0z_0^{(m+1)s}\quad\text{and}\quad T=(G,y_0^\Delta,z_0^\delta).
$$
Then we have
$$
\deg R/T\leq\delta s,\quad\reg T\geq\reg U+\Delta+\delta-2,\quad\text{and}\quad\pd R/T\geq\pd Q/U+2.
$$
Moreover, the following hold.
\begin{enumerate}
    \item If $(m+1)(\Delta-s+1)\geq\delta$, then $\deg R/T=\delta s$.
    \item If $g_s$ is irreducible, then so is $G$ provided that the $g_i$ are linearly independent over $\Bbbk$.
\end{enumerate}
\end{Prop}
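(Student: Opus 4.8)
The plan is to study $R/T$ rather than $T$, exploiting that $R/T$ is a finitely generated module over the subring $Q$. Writing $J=(y_0^\Delta,z_0^\delta)$, the complete intersection $R/J$ is free over $Q$ with basis $\{e_{ij}:=y_0^iz_0^j:0\le i<\Delta,\ 0\le j<\delta\}$, and $R/T=(R/J)/G\cdot(R/J)$ is the cokernel of multiplication by $G$. Thus $R/T$ is finite over $Q$ and annihilated by a power of $(y_0,z_0)$, so its local cohomology is the same computed over $R$ or over $Q$ (this is exactly the reduction used in \Cref{family}, via \cite[Corollary 4.5 and Proposition A1.16]{eisenbud2005geometry}). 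I therefore get $\reg R/T=\reg_Q(R/T)$ and $\pd_R(R/T)=\pd_Q(R/T)+2$, while $\sqrt T=(y_0,z_0)$ makes $(y_0,z_0)$ the unique minimal prime, so $\deg R/T$ is the length of $R/T$ at that prime (with $\deg R/(y_0,z_0)=1$). Everything reduces to statements about the $Q$-module $R/T$.

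The heart of the regularity and projective-dimension bounds is that $R/T$ contains $C:=(Q/U)(-(\Delta+\delta-2))$ as a graded $Q$-direct summand. Writing $G=\sum_{k=0}^s g_ky_0^kz_0^{(m+1)(s-k)}$, a direct check gives, in $R/J$, the identity $g_l\,e_{\Delta-1,\delta-1}=G\cdot e_{\Delta-1-l,\ \delta-1-(m+1)(s-l)}$ for each $l=0,\dots,s$: the source monomial is legitimate precisely because $\Delta>s$ and $\delta>(m+1)s$ (hypothesis (3)), and every off-diagonal term of $G$ lands outside the range $0\le i<\Delta,\ 0\le j<\delta$ and hence vanishes. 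So $U\cdot e_{\Delta-1,\delta-1}\subseteq\operatorname{im}G$, the map $C\to R/T$ sending $1$ to the class of $e_{\Delta-1,\delta-1}$ is well defined, and it is split by the ``corner coefficient'' functional (well defined on $R/T$ since every coefficient $g_k$ of $G$ lies in $U$). As $C$ is then a summand, $\beta^Q_{i,j}(R/T)\ge\beta^Q_{i,j}(C)$ for all $i,j$, whence $\reg_Q(R/T)\ge\reg(Q/U)+\Delta+\delta-2$ and $\pd_Q(R/T)\ge\pd(Q/U)$. Translating through the first paragraph gives $\reg T=\reg R/T+1\ge\reg U+\Delta+\delta-2$ and $\pd R/T\ge\pd Q/U+2$.

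For the degree I pass to $K=K(Q)$ and compute $\deg R/T=\dim_K K[y_0,z_0]/(\bar G,y_0^\Delta,z_0^\delta)$, where $\bar G$ is the image of $G$. For a monomial order with $y_0>z_0$ one has $\operatorname{in}(\bar G)=y_0^s$ (using $g_s\neq0$), and since $y_0^s$ and $z_0^\delta$ are coprime, $\{\bar G,z_0^\delta\}$ is automatically a Gr\"obner basis with $\dim_K K[y_0,z_0]/(\bar G,z_0^\delta)=s\delta$; quotienting further by $y_0^\Delta$ can only decrease this, so $\deg R/T\le s\delta$. For equality I use that $\bar G$ is weighted homogeneous for $\mathrm{wt}(y_0)=m+1,\ \mathrm{wt}(z_0)=1$, so in $K[y_0,z_0]/(\bar G)$ — free over $K[z_0]$ of rank $s$ — the element $y_0^\Delta$ has normal form $\sum_{a=0}^{s-1}c_a z_0^{(m+1)(\Delta-a)}y_0^a$, whose smallest $z_0$-exponent is $(m+1)(\Delta-s+1)$. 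Hence $y_0^\Delta$ vanishes modulo $z_0^\delta$ once $(m+1)(\Delta-s+1)\ge\delta$, forcing $\deg R/T=s\delta$.

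For irreducibility I view the Newton polygon of $G$ in $(y_0,z_0)$, the segment from $(0,(m+1)s)$ to $(s,0)$; a factorization $G=AB$ decomposes it as a Minkowski sum, so both factors have the shape $\sum_k A_ky_0^kz_0^{(m+1)(a-k)}$, i.e. they are binary forms in $y_0$ and $W:=z_0^{m+1}$ over $Q$. This reduces irreducibility of $G$ to that of $G^\sharp=\sum_k g_ky_0^kW^{s-k}$. When $m\ge1$, grading by $\mathrm{wt}(W)=m,\ \mathrm{wt}(y_0)=0$ forces any factor with unit leading coefficient to be a power of $y_0$, excluded since $g_0\neq0$; when $m=0$ all coefficients share a degree, and a nonunit $\Bbbk$-coefficient factor would place $g_0,\dots,g_s$ in a $\Bbbk$-span of fewer than $s+1$ vectors, contradicting linear independence. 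The main obstacle is the degree part: once the direct-summand observation is in place the regularity and projective dimension are immediate, whereas pinning down the sharp criterion $(m+1)(\Delta-s+1)\ge\delta$ needs the weighted normal form above together with a careful identification of $\deg R/T$ with the generic multiplicity.
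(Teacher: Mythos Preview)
Your argument is correct and follows essentially the same route as the paper: the corner-monomial direct summand $(Q/U)(-(\Delta+\delta-2))\hookrightarrow R/T$ is exactly the mechanism behind the cited \cite[Proposition~7.1]{caviglia2019regularity}, and your bihomogeneity/Newton-polygon reduction for irreducibility matches the paper's use of the two gradings. The one place you streamline things is the degree equality: rather than decomposing $V=K(Q)\otimes_Q R/T$ by the weight grading and inspecting presentation matrices piece by piece (as the paper does), you observe directly that weighted homogeneity forces the normal form of $y_0^\Delta$ in $K[y_0,z_0]/(\bar G)$ to lie in $(z_0^{(m+1)(\Delta-s+1)})$, so $y_0^\Delta$ is redundant once $(m+1)(\Delta-s+1)\ge\delta$ --- a cleaner way to the same conclusion. (One small caveat, shared with the paper: your line ``excluded since $g_0\neq0$'' in the $m\ge1$ irreducibility case invokes a hypothesis that is not stated; indeed if $g_0=0$ then $y_0\mid G$, so the proposition tacitly assumes $g_0\neq0$, which holds in all applications.)
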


\begin{proof}
Mimicking the proof of \cite[Proposition 7.1]{caviglia2019regularity} with the monomial $y^{\Delta-1}z^{\delta-1}$ we have the desired lower bounds of $\reg T$ and $\pd R/T$. Also, since the radical of $T$ is $(y_0,z_0)$, we obtain
$$
\deg R/T=\dim_KR/T\otimes_QK,
$$
where $K$ is the fraction field of $Q$. Let $V$ be the $K$-vector space $R/T\otimes_QK$. By dividing elements of $V$ by $G$ the vector space $V$ is is shown to be generated by the monomials
\begin{equation}\label{monomials}
\begin{array}{ccc}
y_0^0z_0^{\delta-1},	&\ldots,	&y_0^{s-1}z_0^{\delta-1},	\\
\vdots					&			&\vdots					\\
y_0^0z_0^0,			&\ldots,	&y_0^{s-1}z_0^0.
\end{array}
\end{equation}
Hence, the upper bound $\deg R/T\leq\delta s$ holds.

Next assume that $(m+1)(\Delta-s+1)\geq\delta$. Introduce a new grading $\deg'$ having values $\deg' y_0=m+1$, $\deg' z_0=1$, and $\deg'=0$ on $Q$. According to the values $j$ of this grading we get the $K$-vector space decomposition
$$
V=\bigoplus_{j\in\mathbb Z}V_j.
$$
We divide into two cases: $j<\Delta(m+1)$ and $j\geq\Delta(m+1)$. If $j<\Delta(m+1)$, and if we express $j$ as $j=a(m+1)+b$ with integers $0\leq b<m+1$ and $0\leq a<\Delta$, then the summand $V_j$ is generated by the set $\{y_0^az_0^b,y_0^{a-1}z_0^{b+m+1},\ldots\}$ and becomes the cokernel of a matrix of the form
$$
\begin{pmatrix}
g_s 		&		&			\\
g_{s-1} 	&\ddots&			\\
\vdots 		&\ddots&g_s 		\\
			&		&g_{s-1} 	\\
			&		&\vdots
\end{pmatrix}
.
$$
However, in the range $j\geq\Delta(m+1)$ some matrix of the form
$$
\begin{pmatrix}
\cdots 	&g_{s-1}	&g_s 		&			&			&		\\
		&			&\ddots 	&\ddots 	&			&		\\
		&			&\cdots	&g_{s-1}	&g_s 		&
\end{pmatrix}
$$
works for a presentation of $V_j$. It is due to the assumption $(m+1)(\Delta-s+1)\geq\delta$. So one may observe that the monomials (\ref{monomials}) form a basis of $V$.

Finally, let us see the irreducibility of $G$. Refer to the fact that $G$ is homogeneous with respect to both the standard grading and $\deg'$. Suppose that 
$$
G=(a_ty_0^t+\cdots+a_0z_0^{(m+1)t})(b_uy_0^u+\cdots+b_0z_0^{(m+1)u})
$$ 
for integers $t,u\geq0$ with $t+u=s$ and homogeneous forms $a_i,b_j\in Q$ with $\deg a_i=mi+\deg a_0$ and $\deg b_j=mj+\deg b_0$. We find that $a_tb_u=g_s$, and so we may assume that $a_t$ is a nonzero constant. Since $\deg a_i=mi+\deg a_0$, and since the $g_i$ are $\Bbbk$-linearly independent, we have $t=0$, and thus the first factor above is a nonzero constant, which means that $G$ is irreducible.
\end{proof}

We apply the proposition above to suitable homogeneous ideals in \Cref{family}.

\begin{Cor}\label{Ingredient}
Fix an integer $k\geq 2$, and let $d\geq 2$ be an integer. Then the polynomial ring $R=\Bbbk[y_0,z_0,\ldots,y_k,z_k]$ admits an irreducible $(d+6k-8)$-form $G_{k,d}\in R$ and a three-generated homogeneous ideal 
$$
T_{k,d}=(G_{k,d},y_0^{6k-6},z_0^{6k-7})\subset R
$$ 
such that 
$$
\reg T_{k,d}=\Omega(d^k),\quad\text{and}\quad\pd R/T_{k,d}=2k+2
$$
together with
$$
\deg R/(G_{k,d},y_0^{(6k-6)i},z_0^{(6k-7)i})=(6k-7)(3k-4)i
$$
for all $i\geq 1$.
\end{Cor}

\begin{proof}
Write $Q=\Bbbk[y_1,z_1,\ldots,y_k,z_k]$. By setting
$$
(b_i,c_i,a_{i+1})=(d+3i-3,d+3i-2,d+3i-2),\quad 1\leq i\leq k-1,
$$
\Cref{family} produces a homogeneous ideal $U=(g_0,\ldots,g_{3k-4})\subset Q$ whose generators are arranged as
$$
g_{3i}=y_{i+1}^{d+3i},\quad g_{3i+1}=z_{i+1}^{d+3i+1},\quad\textup{and}\quad g_{3i+2}=y_{i+2}^{d+3i+1}z_{i+1}-z_{i+2}^{d+3i+1}y_{i+1}
$$
for integers $1\leq i\leq k-1$. \Cref{fixing} accomplishes the proof.
\end{proof}

\bigskip

\section{Proof of the main theorem}\label{secproof}

In this section we show the main theorem.

\begin{proof}[\noindent{\bf Proof of \Cref{ce}.}]
Fix an integer $k\geq2$, and set $d\geq 2$. Consider the three-generated homogeneous ideal $$
T_{k,d}=(G_{k,d},y_0^{6k-6},z_0^{6k-7})\subset R=\Bbbk[y_0,z_0,\ldots,y_k,z_k]
$$ 
formed in \Cref{Ingredient}. Take
$$
Y_d=V(G_{k,d})\subset\mathbb P^{2k+1}\quad\text{and}\quad\lambda=\frac{y_0^{6k-6}}{z_0^{6k-7}}\in\Bbbk(\widehat{Y_d})
$$
to construct an unprojection $(X_d,z)=\Unpr(Y_d,\lambda)$ in $\mathbb P^{2k+2}$. We claim that the sequence 
$$
(X_d\subset\mathbb P^{2k+2}:d\geq 2)
$$ 
is the desired one with dimension $n=2k$ and codimension $e=2$.

Indeed, notice that part of \Cref{fixing} induces upper bounds
$$
\deg W_i\leq\deg R/(G_{k,d},y_0^{(6k-6)i},z_0^{(6k-7)i})\leq i\deg W_1
$$
for the subschemes $W_i=V((a,f)^i)\subset Y_d\subset\mathbb P^{2k+1}$. By \Cref{degsimple} and \Cref{compu} we find that $\pi_z|_{X_d}$ is simple and that 
$$
\lim_{d\to\infty}\deg X_d=\infty,\quad\reg X_d=\Omega((\deg X_d)^k),\quad\textup{and}\quad\pd S(X_d)=2k+1
$$
due to invariants of $T_{k,d}$.

For higher codimension cases $e>2$ conduct the following process. Suppose given a counterexample $X_d\subset\mathbb P^{2k+e-1}$ of dimension $2k$ and codimension $e-1\geq 2$, and say that $S(X_d)$ has depth $2$. Then pick a \emph{general} fake linear form of degree $1$ on $X_d\subset\mathbb P^{2k+e-1}$, and apply \Cref{generalcompu}. We have therefore constructed a sequence 
$$
(X_d\subset\mathbb P^{2k+e}:d\geq 2)
$$ 
of nondegenerate projective $2k$-folds satisfying
$$
\lim_{d\to\infty}\deg X_d=\infty,\quad\reg X_d=\Omega((\deg X_d)^k),\quad\text{and}\quad\pd S(X_d)=2k+e-1
$$
for every $e\geq 2$. On the other hand for $(2k-1)$-fold counterexamples consider a general hyperplane section of each $2k$-fold $X_d$. It is valid thanks to $\depth S(X_d)=2$. We are done.
\end{proof}

\begin{Rmk}
It can be said that our counterexamples work as they carry very bad singularities. For instance, let us look at the case $(X,z)=(X_d,z)$ of dimension $2k$ and codimension $2$ in the proof above. Then the point $z\in\mathbb P^{2k+2}$ turns out to be a singular point of $X$, and its projectivized tangent cone $\mathbb PC_zX$ is cut out by $K_1(X,z)$, having exceedingly high regularity compared to the multiplicity of $X$ at $z$. Recall that the pathological syzygies of $I(X)$ immediately come from those of $M_1(X,z)$ in view of the cancellation principle.
\end{Rmk}

Finally, we present in detail a counterexample to the regularity conjecture obtained from a homogeneous ideal in \Cref{BMNSSS} by applying our method. Many of the computations below are done by the computer algebra system \emph{Macaulay2} \cite{M2} over the field $\mathbb Q$ of rational numbers.

\begin{Ex}
Let $R=\Bbbk[y_0,z_0,y_1,z_1,y_2,z_2]$ be the homogeneous coordinate ring of $\mathbb P^5$, and take $Y\subset\mathbb P^5$ to be the hypersurface given by the irreducible homogeneous polynomial
$$
G=y_1^4y_0^2+(z_2^3y_1-y_2^3z_1)y_0z_0+z_1^4z_0^2
$$
that appears in \Cref{BMNSSS}. Notice that the three-generated homogeneous ideal 
$$
T=(G,y_0^4,z_0^3)\subset R
$$ 
satisfies
$$
\deg R/T=6,\quad\reg T=20,\quad\text{and}\quad\pd R/T=6.
$$
Now with the fake linear form 
$$
\lambda=y_0^4/z_0^3\in\Bbbk(\widehat{Y})
$$ 
of degree $1$ on $Y\subset\mathbb P^5$ consider the nondegenerate projective fourfold
$$
(X,z)=\Unpr(Y,\lambda)\quad\text{in}\quad\mathbb P^6.
$$  
Then having
$$
\deg X=18,\quad\reg X=18,\quad\text{and}\quad\pd S(X)=5,
$$
we have established a fourfold counterexample to the regularity conjecture. Here the regularity is exactly one more than the conjectural bound of Eisenbud and Goto. The Betti table $\mathbb B(I(X))$ is computed to be the following.
$$
\begin{array}{c|ccccccccccccccc}
i\backslash j	&4	&5	&6	&7	&8	&9	&10	&11	&12	&13	&14	&15	&16	&17	&18\\ \hline
0      			&1	&	&1	&2	&	&	&3		&		&1		&1		&1		&		&1		&		&1\\
1      			&	&	&	&3	&	&	&6		&		&4		&2		&4		&		&4		&		&4\\
2      			&	&	&	&	&	&	&3		&		&5		&1		&6		&		&6		&		&6\\
3      			&	&	&	&	&	&	&		&		&2		&		&4		&		&4		&		&4\\
4    			&	&	&	&	&	&	&		&		&		&		&1		&		&1		&		&1
\end{array}
$$
Moreover, thanks to $\depth S(X)=2$ a general hyperplane section
$$
X\cap\mathbb P^5\subset\mathbb P^5
$$ 
is a threefold counterexample with the same invariants as above. Compare it with the projective threefold in \cite[Example 4.7]{mccullough2018counterexamples}.

Let us describe singularities of the fourfold $X$. The singular locus of $X$ is
$$
\Sing X=C\cup\Lambda_1\cup\cdots\cup\Lambda_4\cup\mathbb P^3\subset\mathbb P^6,
$$
where $C\subset\mathbb P^6$ is a quartic curve in a $2$-plane, the $\Lambda_i\subset\mathbb P^6$ are $2$-planes in a certain order, and $\mathbb P^3\subset\mathbb P^6$ is a $3$-plane. Here the location of $z\in\mathbb P^6$ is obtained as
$$
z\in(C\cap\Lambda_1\cap\Lambda_2)\setminus(\Lambda_3\cup\Lambda_4\cup\mathbb P^3)\subset\Sing X.
$$
The projectivized tangent cone $\mathbb PC_zX\subset\mathbb P^5$ is of degree $12$, its homogeneous ideal has regularity $17$, and its support is a $3$-plane in $\mathbb P^5$.
\end{Ex}

\bigskip

\noindent{\bf Acknowledgements.} 
We would like to thank Sijong Kwak for his advice. We are also grateful to Robert Lazarsfeld for his interest. Experiments with Macaulay2 \cite{M2} were very helpful for this study. This work was supported by the National Research Foundation of Korea (NRF) grant funded by the Korea government (No.\ 2019R1A2C3010487).

\bigskip

\nocite{*}
\bibliographystyle{alpha}
\bibliography{Unprojection}


\end{document}